\let\oldlabel=\label
\def\prellabel{\marginparsep=1em\marginparwidth=44pt
  \def\label##1{\oldlabel{##1}\ifmmode\else\ifinner\else
    \marginpar{{\footnotesize\ \\ \tt ##1}}\fi\fi}}
\theoremstyle{plain} \newtheorem{thm}{Theorem}[section]
\newtheorem{prop}[thm]{Proposition} 
\newtheorem{Questions}[thm]{Questions} \newtheorem*{Q}{Question}
\theoremstyle{definition} \newtheorem{defn}[thm]{Definition}
\newtheorem{rmk}[thm]{Remark} \newtheorem{ex}[thm]{Example}
\newtheorem*{conditions}{Conditions}
\newcommand{\NN}{{\mathbb N}} \newcommand{\PP}{{\mathbb P}}
\newcommand{\ZZ}{{\mathbb Z}}
 \DeclareMathOperator{\ini}{in}
\DeclareMathOperator{\St}{St} \DeclareMathOperator{\Lex}{Lex}
\DeclareMathOperator{\HS}{HS}  
\title{On Fr\"oberg-Macaulay conjectures for algebras}
\author{M. Boij} \address{KTH - Royal Institute of Technology, SE-100
  44 Stockholm, Sweden } \email{boij@kth.se}
\author{A. Conca} \address{Dipartimento di Matematica, Universit\`a di
  Genova, Via Dodecaneso 35, I-16146 Genova, Italy}
\email{conca@dima.unige.it}
\thanks{The first author was partially supported by VR2013-4545 and
  the second author was partially supported by INdAM-GNSAGA}
\subjclass[2010]{13D40, 14M25}
\begin{document}
\maketitle

\section*{Introduction}
  Macaulay's theorem and Fr\"oberg's conjecture deal with
  the Hilbert function of homogeneous \emph{ideals} in polynomial rings $S$ over a field $K$.  In this
  short note we present some questions related to variants of
  Macaulay's theorem and Fr\"oberg's conjecture for $K$-\emph{subalgebras}
  of polynomial rings.  In details, given a subspace $V$ of forms of degree $d$ we consider the $K$-subalgebra 
  $K[V]$ of $S$ generated by  $V$.  What can be said about Hilbert function of $K[V]$? The analogy with the ideal case suggests  several  questions. To state them we start by recalling Macaulay's theorem,  Fr\"oberg's conjecture
  and Gotzmann's persistence theorem for ideals. Then we presents the variants for $K$-subalgebras along with some partial results and examples.

\section{Macaulay's theorem and Fr\"oberg's conjecture for ideals} 
Let $K$ be a field and $S=K[x_1,\dots, x_n]$ be the polynomial ring
equipped with its standard grading, i.e., with $\deg x_i=1$ for
$i=1,\dots,n$. Then $S=\bigoplus_{j=0}^\infty S_j$ where $S_j$ denotes
the $K$-vector space of homogeneous polynomials of degree $j$.  Given
positive integers $d,u$ such that $u\leq \dim S_d$ let $G(u,S_d)$ be
the Grassmannian of all $u$-dimensional $K$-subspaces of $S_d$.  For a
given subspace $V\in G(u,S_d)$, the homogeneous components of the
ideal $I=(V)$ of $S$ generated by $V$ are the vector spaces $S_jV$,
i.e., the vector spaces generated by the elements $fg$ with $f\in S_j$
and $g\in V$.
\begin{Q}
  What can be said about the dimension of $S_jV$ in terms of
  $u = \dim V$?
\end{Q}

\subsection{Lower bound:} Macaulay's theorem on Hilbert
functions~\cite{Macaulay} provides a lower bound for $\dim S_j V$
given $\dim V$. It asserts that there exists a subspace
$L\in G(u,S_d)$ such that
\[\dim S_jL \leq \dim S_jV\]
for every $j$ and every $V\in G(u,S_d)$.  Furthermore $\dim S_jL$ can
be expressed combinatorially in terms of $d$ and $u$ by means of the
so-called \emph{Macaulay expansion}, see \cites{BrunsHerzog,Sury} for
details.  The vector space $L$ can be taken to be generated by the
largest $u$ monomials of degree $d$ with respect to the lexicographic
order.  Such an $L$ is called the \emph{lex-segment} (vector space)
associated to the pair $d$ and $u$ and it is denoted by $\Lex(u,S_d)$.

\subsection{Persistence:} A vector space $L\in G(u,S_d)$ is called
\emph{Gotzmann} if it satisfies
\[\dim S_1L =\dim S_1\Lex(u,S_d),\] i.e., if
\[
  \dim S_1L \leq \dim S_1V,
\]
for all $V\in G(u,S_d)$.  Gotzmann's persistence theorem
\cite{Gotzmann} asserts that if $L\in G(u,S_d)$ is Gotzmann then
$S_1L$ is Gotzmann as well.  In particular if $L$ is Gotzmann one has
\[\dim S_jL \leq \dim S_jV,\]
for all $j$ and all $V\in G(u,S_d)$.

\subsection{Upper bound:} Clearly, the upper bound for $\dim S_jV$ is
given by the $\dim S_jW$ for a ``general" $W$ in $G(u,S_d)$. More
precisely, there exists a non-empty Zariski open subset $U$ of
$G(u,S_d)$ such that for every $V\in G(u,S_d)$, for every $j\in \NN$
and every $W\in U$ one has
\[
  \dim S_jV \leq \dim S_jW.
\]

Fr\"oberg's conjecture predicts the values of the upper bound
$\dim S_jW$. For a formal power series
$f(z)=\sum_{i=0}^\infty f_i z^i\in \ZZ\llbracket z\rrbracket$ one
denotes $[f(z)]_+$ the series $\sum_{i=0}^\infty   g_i z^i$, where
$g_i=f_i$ if $f_j\geq 0$ for all $j\leq i$ and $g_i=0$ otherwise.  Given $n,u$ and $d$ one considers the
formal power series:
\[
  \sum c_i z^i=\left [\frac{(1-z^d)^u}{(1-z)^n}\right]_+
\]
and then Fr\"oberg's conjecture asserts that
$\dim S_jW=\dim S_{j+d}-c_{d+j}$ for all $j$.  It is known to be true
in these cases:
\begin{itemize}
\item[(1)] $n\leq 3$ and any $u,d,j$, \cites{Froberg,Anick},
\item[(2)] $u\leq n+1$ and any $d,j$, \cite{Stanley},
\item[(3)] $j=1$ and any $n,u,d$, \cite{HochsterLaksov}
\end{itemize}
and it remains open in general. See \cite{Nenashev} for some recent
contributions.

\section{Macaulay's theorem and Fr\"oberg's conjecture for
   subalgebras}
 For any subspace $V\in G(u,S_d)$ we can consider the $K$-subalgebra
 $K[V]\subseteq S$ generated by $V$. Indeed, $K[V]$ is the coordinate
 ring of the closure of the image of the rational map
 $\PP^{n-1} \dashrightarrow \PP^{u-1}$ associated to $V$.
 
 The homogeneous component of degree $j$ of $K[V]$ is the vector space $V^j$,
 i.e., the $K$-subspace of $S_{jd}$ generated by the elements of the
 form $f_1\cdots f_j$ with $f_1,\dots, f_j\in V$.
 \begin{Q}
   What can be said about the dimension of $V^j$? In other words, what
   can be said about the Hilbert function of the $K$-algebra $K[V]$?
 \end{Q}
 
\begin{defn}
  For positive integers, $n$, $d$, $u$ and $j$, define
  \[
    L(n,d,u,j)=\min\{ \dim V^j : V \in G(u, S_d) \}
  \]
  and
  \[
    M(n,d,u,j)=\max\{ \dim V^j : V \in G(u, S_d) \}.
  \]
\end{defn}
 
\subsection{Lower bound:}  
Recall that a monomial vector space $W$ is said to be \emph{strongly
  stable} if $mx_i/x_j \in W$ for every monomial $m\in W$ and $i<j$
such that $x_j|m$.  Intersections, sums and products of strongly
stable vector spaces are strongly stable. Given monomials
$m_1,\dots,m_c\in S_d$ the smallest strongly stable vector space
containing them is denoted by $\St(m_1,\dots,m_c)$ and it is called
the strongly stable vector space generated by $m_1,\dots,m_c$.
 
\begin{prop} Given $n,d,u$ and $j$ there exists a strongly stable
  vector space $W\in G(u,S_d)$ such that
  \[
    L(n,d,u,j)=\dim W^j
  \] independently of the field $K$.
\end{prop}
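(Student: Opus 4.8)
The plan is to reduce the computation of $L(n,d,u,j)$ to monomial spaces by passing to initial spaces, and then to upgrade a monomial minimizer to a strongly stable one by means of a generic change of coordinates in characteristic zero, transporting the outcome to an arbitrary field by a count of monomials.

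First I would fix an arbitrary term order on the monomials of $S$ and, for any $V\in G(u,S_d)$, consider its initial space $\ini(V)$, the span of the leading monomials of the elements of $V$. Since the leading term of a product is the product of the leading terms, one has $\ini(V)^j\subseteq \ini(V^j)$, and as both are monomial spaces this gives $\dim \ini(V)^j\le \dim \ini(V^j)=\dim V^j$. Because $\ini(V)$ is a monomial space of dimension $u$, it follows that the minimum defining $L(n,d,u,j)$ is attained on monomial spaces, that is, $L(n,d,u,j)=\min\{\dim M^j: M\in G(u,S_d)\text{ monomial}\}$. This already settles the field-independence assertion: for a monomial space $M$ the space $M^j$ is spanned by the finitely many distinct monomials $m_1\cdots m_j$ with $m_i\in M$, so $\dim M^j$ is the cardinality of this product set and does not depend on $K$; being a minimum of finitely many such integers, $L(n,d,u,j)$ is likewise independent of $K$.

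It remains to produce a strongly stable minimizer, and here I would first work over a field of characteristic zero, where I can apply a generic linear change of coordinates $g$ to a monomial minimizer $M$ and pass to the initial space $\ini(g\cdot M)$. Because $g$ acts as a graded algebra automorphism, $(g\cdot M)^j=g\cdot M^j$ and hence $\dim(g\cdot M)^j=\dim M^j$; combined with the initial-space inequality above applied to $g\cdot M$, this yields $\dim \ini(g\cdot M)^j\le \dim M^j=L(n,d,u,j)$, which forces equality since $\ini(g\cdot M)\in G(u,S_d)$. For generic $g$ the space $\ini(g\cdot M)$ is the generic initial space, which by the theorem of Galligo and Bayer--Stillman is Borel-fixed, and in characteristic zero Borel-fixed monomial spaces are exactly the strongly stable ones. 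Thus $W:=\ini(g\cdot M)$ is strongly stable with $\dim W^j=L(n,d,u,j)$.

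Finally I would remove the characteristic assumption. The monomial set underlying $W$ is a purely combinatorial object: strong stability is a condition on exponents, independent of $K$, and by the count above $\dim W^j$ equals the same product-set cardinality over every field. Since $L(n,d,u,j)$ is itself field-independent, the strongly stable space $W$ satisfies $\dim W^j=L(n,d,u,j)$ over every $K$, as required. The one genuine obstacle is precisely the passage through positive characteristic, where the generic initial space is only Borel-fixed and need not be strongly stable; the whole point of the argument is that one never constructs $W$ in positive characteristic directly, but rather produces it once in characteristic zero and then invokes the field-independence of both $\dim W^j$ and $L(n,d,u,j)$ to conclude.
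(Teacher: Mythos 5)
Your proposal is correct and follows essentially the same route as the paper's own proof: reduction to monomial spaces via the inclusion $\ini(V)^j\subseteq \ini(V^j)$, field-independence by the combinatorial nature of monomial dimension counts, and production of a strongly stable minimizer as a generic initial space (Borel-fixed, hence strongly stable in characteristic zero). Your write-up merely makes explicit a few steps the paper leaves terse, notably the transfer back from characteristic zero to an arbitrary field.
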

 
\begin{proof} Given a term order $<$ on $S$ for every $V\in G(u,S_d)$
  one has $\ini(V)^j\subseteq \ini(V^j)$ for every $j$. Hence one has
  $\dim V_0^j\leq \dim V^j$ where $V_0=\ini(V)$.  Therefore the lower
  bound $L(n,d,u,j)$ is achieved by a monomial vector space.
  Comparing the vector space dimension of monomial algebras is a
  combinatorial problem and hence we may assume the base field has
  characteristic $0$.  Applying a general change of coordinates, we
  may put $V$ in ``generic coordinates" and hence $\ini(V)$ is the
  generic initial vector space of $V$ with respect to some term order.
  Being such it is Borel fixed.  Since the base field has
  characteristic $0$, we have that $\ini(V)$ is strongly
  stable. Therefore the lower bound $L(n,d,u,j)$ is achieved by a
  strongly stable vector space.
\end{proof}
 
\begin{ex} 
  For $n=3,d=4, u=7$ there are $3$ strongly stable vector spaces:
  \[
    \begin{array}{rlcll}
      1)&   \St\{xy^3, x^2z^2\}&=&
      \langle xy^3, x^2z^2, x^2yz, x^2y^2, x^3z, x^3y, x^4\rangle &\mbox{ -- the Lex Segment} \\ 
      2)&   \St\{ xy^2z \}&=&\langle xy^2z, xy^3, x^2yz, x^2y^2, x^3z, x^3y, x^4\rangle \\
      3)&  \St\{y^4, x^2yz \}&=&\langle y^4, xy^3, x^2yz, x^2y^2, x^3z, x^3y, x^4\rangle  &\mbox{  -- the RevLex Segment }
    \end{array}
  \]
  In this case $2)$ and $3)$ turns out to give rational normal scrolls
  of type $(3,2)$ and $(4,1)$ respectively and they give the minimal
  possible Hilbert function in all values.
\end{ex}

\begin{ex}
\label{ex3512}
For $n=3$, $d=5$ and $u=12$, there are five strongly stable
  subspaces of $S_d$:
\[\begin{array}{l}
W_1 = \St\{x^2z^2,xy^3z\}=
    \langle x^5,x^4y,x^4z,x^3y^2,x^3yz,x^3z^2,x^2y^3,x^2y^2z,x^2yz^2,x^2z^3,xy^4,xy^3z\rangle,
   \\
    W_2 =\St\{xy^2z^2\}=
    \langle x^5,x^4y,x^4z,x^3y^2,x^3yz,x^3z^2,x^2y^3,x^2y^2z,x^2yz^2,xy^4,xy^3z,xy^2z^2\rangle,
    \\
     W_3 =\St\{x^2z^3,y^5\}=
    \langle x^5,x^4y,x^4z,x^3y^2,x^3yz,x^3z^2,x^2y^3,x^2y^2z,x^2yz^2,x^2z^3,xy^4,y^5\rangle,
    \\
    W_4 =\St\{x^2z^2y,xy^3z,y^5\}=
    \langle x^5,x^4y,x^4z,x^3y^2,x^3yz,x^3z^2,x^2y^3,x^2y^2z,x^2yz^2,xy^4,xy^3z,y^5\rangle,
    \\
     W_5 =\St\{x^3z^2,y^4z\}= 
    \langle  x^5,x^4y,x^4z,x^3y^2,x^3yz,x^3z^2,x^2y^3,x^2y^2z,xy^4,xy^3z,y^5,y^4z\rangle.
  \end{array}
\]
In this case, neither the Lex segment, $W_1$, nor the RevLex segment,
$W_5$, achieve the minimum Hilbert function. The Hilbert series are
given by
\[
\begin{array}{rl}
\HS_{K[W_1]} (z)  =\HS_{K[W_5]} (z) = & \displaystyle{ \frac{1+9z+3z^2}{(1-z)^3}}, \\
\HS_{K[W_2]} (z)  =\HS_{K[W_4]} (z)  =&  \displaystyle{\frac{1+9z+2z^2}{(1-z)^3}}, \\
\HS_{K[W_3]} (z)  =                                &  \displaystyle{\frac{1+9z+5z^2}{(1-z)^3}}. \\
\end{array}
\]

and the minimum turns out to be $L(3,5,12,j) = \dim W_2^j = \dim W_4^j = 6j^2+5j+1$,
for $j\ge 1$. 
\end{ex}

\begin{Questions}
\label{qstst}
  \begin{itemize}
  \item[(1)] Does there exist a (strongly stable) subspace
    $W\in G(u,S_d)$ such that $L(n,d,u,j)=\dim W^j$ for every $j$?
  \item[(2)] Given $n,d,u,j$ can one characterize combinatorially the
    strongly stable subspace(s) $W$ with the property
    $L(n,d,u,j)=\dim W^j$?
  \item[(3)] Persistence: Assume $W\in G(u,S_d)$ satisfies
    $L(n,d,u,2)=\dim W^2$. Does it satisfies also
    $L(n,d,u,j)=\dim W^{j}$ for all $j$?
  \end{itemize}
\end{Questions}

\begin{rmk} 
For $n=2$  there exists only one strongly stable vector space in $G(u,S_d)$, i.e. 
$\langle x^{d}, x^{d-1}y, \dots, x^{d-u+1}y^{u-1} \rangle$ (which is both the Lex and RevLev segment) and the questions in \ref{qstst} have all straightforward answers. 
\end{rmk}

\begin{rmk} 
It is proved in  \cite{DN}  that Lex-segments, RevLex-segments and principal strongly stable vector spaces define  
normal  and Koszul toric rings (in particular Cohen-Macaulay). Furthermore in \cite{DFMSS} it is proved that a 
strongly stable vector spaces with two strongly stable generators define a Koszul toric ring. 
On the other hand, there are examples of strongly stable vector spaces  with a non-Cohen-Macaulay and non-quadratic  toric ring, see \cite[Example 1.3]{BC}. 
\end{rmk}

\subsection{Upper bound:}    
As in the ideal case, the upper bound is achieved by a general
subspace $W$, i.e., for $W$ in a non-empty Zariski open subset of
$G(u,S_d)$.
\begin{Q}
  What can be said about the value $M(n,d,u,j)$?
\end{Q}
Obviously,


 \begin{equation}
 \label{eq1} 
  M(n,d,u,j)\leq \min\left\{ \dim S_{jd}, \binom {u-1+j}{u-1} \right\}
\end{equation}

and the naive expectation is that equality holds in  (\ref{eq1}), i.e., if
$f_1, \dots, f_u$ are general forms of degree $d$, then the monomials
of degree $j$ in the $f_i$'s are either linearly independent or they
span $S_{jd}$. It turns out that in nature things are more complex
than expected at first. First of all, if $u>n$ then equality in (\ref{eq1}) would imply that 
for a generic $W$ one would have $W^j=S_{jd}$ for large $j$. This  fact can be stated  
in terms of projections of the $d$-th Veronese variety: the projection associated to $W$ is an isomorphism.  
Recall that a generic linear projection of a smooth projective variety of dimension $m$  from some  projective space where its embedded, into a projective space of dimension $c$ is an isomorphism if  $c\geq 2m+1$. 
Hence we have that if $u\geq 2n$ then equality in  (\ref{eq1}) holds at least for large $j$. On the other hand, for $n+1\leq u<2n$ equality in  (\ref{eq1}) should not be expected unless one knows  that the corresponding projection of the Veronese variety behaves in an unexpected way.  

Summing up, the most natural question turns out to be: 

\begin{Q} 
Assume that $u\geq 2n$.  Is it true that 
\[ 
  M(n,d,u,j)=\min\left\{ \dim S_{jd}, \binom {u-1+j}{u-1} \right\}
\]
holds for all $j$? 
\end{Q} 

The answer turns out to be negative as the following example shows:

\begin{prop}
\label{8quadrics} 
 For any space $W$
  generated by eight quadrics in four variables the dimension of $W^2$
  is at most $34$ independently of the base field $K$. That is:
  $$M(4,2,8,2)\leq 34<\min\left\{\dim S_{4},\binom{7+2}{7}\right\}=35.$$
\end{prop}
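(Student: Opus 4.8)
The plan is to translate the statement into a question about a multiplication map and then to produce, for \emph{every} $W$, two independent quadratic relations among the eight quadrics. Write $\mu\colon\operatorname{Sym}^2 W\to S_4$ for the multiplication map, so that $\dim W^2=\operatorname{rank}\mu$ and $\dim W^2\le 34$ is equivalent to $\dim\ker\mu\ge 2$. Since $\ker\mu=\operatorname{Sym}^2 W\cap R$, where $R=\ker(\operatorname{Sym}^2 S_2\to S_4)$ is the $20$-dimensional space of degree-two relations of the second Veronese of $\PP^3$ (the $2\times 2$ minors of a generic symmetric $4\times 4$ matrix), the goal becomes $\dim(\operatorname{Sym}^2 W\cap R)\ge 2$. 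The naive dimension count only gives $\dim(\operatorname{Sym}^2 W\cap R)\ge \dim\operatorname{Sym}^2 W+\dim R-\dim\operatorname{Sym}^2 S_2=36+20-55=1$; the whole content of the proposition is this extra $+1$, which is exactly why $34$ rather than $35$ is the bound.

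To access the second relation I would dualise. Let $A=W^{\perp}$, a pencil inside the space $T_2$ of quadratic forms in the dual variables $y_1,\dots,y_4$, and write $A=\langle D_1,D_2\rangle$. A dimension count identifies $(\operatorname{Sym}^2 W)^{\perp}$ inside $\operatorname{Sym}^2 T_2$ with $A\cdot T_2$, the degree-two part of the ideal generated by the pencil (both are $55-36=19$-dimensional and one contains the other), while $R^{\perp}$ is the space of catalecticant forms, i.e.\ those symmetric arrays whose entry at $(y_ay_b,\,y_cy_d)$ depends only on the quartic monomial $y_ay_by_cy_d$. Setting $\Lambda(\ell,\ell')=[\,D_1\odot\ell+D_2\odot\ell'\,]\in R^{*}=\operatorname{Sym}^2T_2/R^{\perp}$, one checks that $\dim\ker\Lambda=\dim(\operatorname{Sym}^2 W\cap R)$, and that the Koszul syzygy $(\ell,\ell')=(D_2,-D_1)$ accounts for the expected one-dimensional kernel. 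Thus everything reduces to showing that $\Lambda$, a map between two $20$-dimensional spaces, has rank at most $18$.

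The computation I would then carry out is for a general pencil, which over $\overline K$ in characteristic $\ne 2$ is simultaneously diagonalisable: $D_1=\sum_i y_i^2$ and $D_2=\sum_i\lambda_i y_i^2$ with distinct $\lambda_i$. For such $D_1,D_2$ every entry of the array $D_1\odot\ell+D_2\odot\ell'$ indexed by two off-diagonal monomials vanishes identically; in particular the three entries attached to the single squarefree quartic monomial $y_1y_2y_3y_4$ are all zero, so the two catalecticant (constancy) conditions that this monomial imposes are automatically satisfied. Hence two of the twenty linear equations cutting out $\ker\Lambda$ are identically zero, giving $\operatorname{rank}\Lambda\le 18$ and $\dim\ker\Lambda\ge 2$. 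Solving the surviving equations explicitly (on the diagonal coefficients they reduce to the system $p_i+p_j+\lambda_i q_j+\lambda_j q_i=0$, while the off-diagonal coefficients are forced to vanish) confirms that the kernel is exactly two-dimensional, so the bound $34$ is sharp for a general $W$.

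Finally I would remove the genericity and characteristic hypotheses at once. The equations defining $\Lambda$ are linear in the coefficients of $D_1,D_2$ with integer coefficients, and the rank bound just established holds on the dense locus of diagonalisable pencils in characteristic zero; therefore every $19\times 19$ minor of the universal coefficient matrix vanishes on a Zariski-dense set over $\overline{\mathbb{Q}}$, hence identically as a polynomial over $\ZZ$, hence modulo every prime. Consequently $\operatorname{rank}\Lambda\le 18$ for \emph{every} pencil over \emph{every} field, giving $\dim(\operatorname{Sym}^2 W\cap R)\ge 2$, i.e.\ $\dim W^2\le 34$, for all $W\in G(8,S_2)$ and all $K$. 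The main obstacle is precisely the production of the second, non-Koszul relation; the diagonalisation is what makes transparent that it is governed by the unique squarefree quartic monomial $x_1x_2x_3x_4$, and verifying that the remaining eighteen conditions stay independent is the only additional (routine) point needed to see that the bound is attained.
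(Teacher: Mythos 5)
Your argument is correct, and its mathematical core coincides with the paper's: both proofs reduce to a generic $W$ over an algebraically closed field of characteristic zero by semicontinuity of the rank of the multiplication map, pass to the apolar pencil, simultaneously diagonalise it, and extract the two extra relations from the three factorisations of the unique squarefree quartic monomial. The difference is purely one of packaging. The paper works on the primal side: once the apolar pencil is diagonal, $W$ contains every $x_ix_j$ with $i<j$, and the identities $(x_1x_4)(x_2x_3)=(x_1x_2)(x_3x_4)=(x_1x_3)(x_2x_4)$ are two visible relations among the $36$ products, giving $\dim W^2\leq 34$ in one line. You instead dualise, encode everything in the map $\Lambda\colon T_2\oplus T_2\to R^{*}$, and observe that the two catalecticant conditions attached to $y_1y_2y_3y_4$ are identically satisfied, so $\operatorname{rank}\Lambda\leq 18$; this is the same pair of relations seen from the other side of the apolarity pairing, at the cost of the bookkeeping needed to verify $\dim\ker\Lambda=\dim\ker\mu$. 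What your version buys is an explicit explanation of why the naive intersection count gives only $+1$ and where the extra $+1$ comes from, plus the sharpness computation (which the paper defers to its next proposition). One small suggestion: run your final specialisation argument directly on the $35\times 35$ minors of the multiplication map $\operatorname{Sym}^2W\to S_4$ rather than on the $19\times 19$ minors of $\Lambda$; the identifications $(\operatorname{Sym}^2W)^{\perp}=A\cdot T_2$ and $R^{\perp}=\{\text{catalecticants}\}$ rely on the $\operatorname{Sym}^2$-duality, which degenerates in characteristic $2$, whereas the statement about $\mu$ is characteristic-free as it stands.
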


\begin{rmk}
  This assertion was proven in \cite{COR}*{Theorem.~2.4} using a
  computer algebra calculation. Here we present a more conceptual
  argument.
\end{rmk}

\begin{proof} Firstly we may assume   that $K$ has characteristic $0$ and is algebraically
  closed.  Secondly we may assume that  $W$ is generic. The $8$-dimension space of quadrics $W$
  is apolar to a $2$-dimension space of quadrics, call it $V$. A pair
  of generic quadrics can be put simultaneously in diagonal form,
  i.e., that $V$ is generated by $x_1^2+x_2^2+x_3^2+x_4^2$ and
  $a_1x_1^2+a_2x_2^2+a_3x_3^2+a_4x_4^2$. See for example
  \cite{Wonenburger}. Indeed, it is sufficient that $V$ contains a
  quadric of rank $4$ since that can be put into the form
  $x_1^2+x_2^2+x_3^2+x_4^2$ and the other form can then be
  diagonalized preserving the first.
As a consequence, after a change of coordinates $W$ contains $x_ix_j$ with $1\leq i<j\leq
  4$. Since $(x_1x_4)(x_2x_3)=(x_1x_2)(x_3x_4)$ and
  $(x_1x_3)(x_2x_4)=(x_1x_2)(x_3x_4)$ we have at least two independent
  relations among the $36$ generators of $W^2$. Therefore
  $\dim W^2\leq 34$.
\end{proof}
 
More precisely one has:
\begin{prop}
 One has  $M(4,2,8,2)=34$ independently of the base field $K$.
\end{prop}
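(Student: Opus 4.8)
The upper bound $M(4,2,8,2)\le 34$ is exactly the content of Proposition~\ref{8quadrics} and holds over every field, so the whole task is to produce, over an \emph{arbitrary} $K$, a single $W\in G(8,S_2)$ with $\dim W^2\ge 34$. The rank of the multiplication map $W\otimes_K W\to S_4$ equals $\dim W^2$, so it suffices to exhibit one $W$ for which this rank reaches $34$; over an infinite field lower semicontinuity of the rank then shows in addition that $\dim W^2=34$ is the generic behaviour, as claimed in the statement preceding the definition of $M$.

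For the construction I would reuse the configuration governing the upper bound. Let $W_0=\langle x_ix_j:1\le i<j\le 4\rangle$ and take $W=W_0+\langle q_1,q_2\rangle$ with $q_a=\sum_{k=1}^4 c_{ak}x_k^2$; since any off-diagonal part of a $q_a$ already lies in $W_0$, this is the general shape of an $8$-dimensional space containing $W_0$. Inside $S_4$ (of dimension $35$) the products of pairs from $W_0$ span the $19$-dimensional space $A=\langle x_i^2x_j^2,\ x_i^2x_jx_k,\ x_1x_2x_3x_4\rangle$, in which the two relations of Proposition~\ref{8quadrics} are already visible. Working modulo $A$, the remaining generators $x_ix_j\cdot q_a$ and $q_aq_b$ project onto the cubes $x_i^3x_j$ and the fourth powers $x_k^4$, and a routine bookkeeping gives
\[
\dim W^2=19+\operatorname{rank}\{\beta^a_{ij}\}+\operatorname{rank}\{\gamma_{ab}\},
\]
where $\beta^a_{ij}=c_{ai}x_i^3x_j+c_{aj}x_ix_j^3$ and $\gamma_{ab}=\sum_k c_{ak}c_{bk}x_k^4$. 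As the cubes and the fourth powers involve disjoint monomials, this equals $34$ precisely when the six $2\times 2$ minors of $C=(c_{ak})$ are all nonzero (independence of the twelve $\beta$'s) and the three vectors $(c_{1k}^2)_k,(c_{1k}c_{2k})_k,(c_{2k}^2)_k$ are linearly independent (independence of the three $\gamma$'s). The first condition says that the four columns of $C$ give four distinct points of $\mathbb{P}^1(K)$, which is possible exactly when $|K|\ge 3$; for such $K$ one can simultaneously arrange the independence of the three $\gamma$'s as well, so $M(4,2,8,2)=34$ over every field with at least three elements.

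The one place where this argument breaks, and the genuine obstacle, is $K=\mathbb{F}_2$. There a space containing $W_0$ is useless: the six minors cannot all be nonzero, since $\mathbb{F}_2^2$ contains only three nonzero vectors and one cannot choose four pairwise independent columns — this is, at the level of fields, the shadow of the failure in characteristic $2$ of the simultaneous diagonalization underlying Proposition~\ref{8quadrics}. I would dispose of the remaining characteristic by exhibiting a single $W$ defined over $\ZZ$, \emph{not} containing all of $W_0$, whose multiplication matrix has a $34\times 34$ minor equal to $\pm 1$; reduction modulo every prime and base change to $\mathbb{Q}$ then force $\dim W^2\ge 34$ in all characteristics at once, and together with Proposition~\ref{8quadrics} this yields $M(4,2,8,2)=34$ independently of $K$. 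Producing such a unimodular example — equivalently, a direct $\mathbb{F}_2$-rational configuration realizing $\dim W^2=34$ — and verifying the minor is the crux of the proof.
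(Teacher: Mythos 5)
Your analysis for fields with at least three elements is essentially the paper's own argument: the same $W=W_0+\langle q_1,q_2\rangle$ with diagonal $q_a$, the same decomposition of $S_4$ into the $19$ monomials of largest exponent $\le 2$, the mixed cubes $x_i^3x_j$, and the fourth powers, and the same two conditions (all six $2$-minors of the coefficient matrix nonzero; the three vectors $(c_{1k}^2)_k$, $(c_{1k}c_{2k})_k$, $(c_{2k}^2)_k$ independent). The paper makes the existence claim concrete by writing down $F=x_1^2+x_3^2+x_4^2$ and $G=x_2^2+\alpha x_3^2+x_4^2$ and checking that both conditions hold whenever $\alpha\neq 0,1$; your appeal to ``one can simultaneously arrange'' the second condition is slightly softer but points at the same family, so up to here the two proofs coincide.

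The genuine gap is the case $K=\ZZ/2\ZZ$, which you correctly identify as the obstruction but do not resolve. You propose finding a $W$ defined over $\ZZ$ whose $34\times 34$ multiplication minor is $\pm 1$, and then reducing modulo every prime — a reasonable strategy, but you explicitly defer producing such a $W$ and call it ``the crux of the proof.'' Since the statement asserts the equality \emph{independently of the base field}, the proof is incomplete without this example: nothing in your argument rules out $M(4,2,8,2)\le 33$ over $\ZZ/2\ZZ$ (indeed your own rank formula shows every $W$ containing $W_0$ gives at most $19+11+3=33$ there, so a qualitatively different space is required). The paper closes this case by exhibiting the explicit space generated by $x_{1}^2, x_{2}^2, x_{3}^2, x_{1}x_{3}, x_{2}x_{4}, x_{3}x_{4}, x_{2}x_{3} + x_{1}x_{4}, x_{1}x_{2} + x_{4}^2$ over $\ZZ/2\ZZ$ and verifying $\dim W^2=34$ by a computer algebra computation. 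To finish your version you would either have to produce and verify such an example (or its unimodular lift), which is exactly the step you have left open.
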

\begin{proof} We have already argued that $M(4,2,8,2)\leq
  34$. Therefore it is enough to describe an $8$-dimension space of
  quadrics $W$ in $4$ variables such that $\dim W^2=34$.  We set
  \[
    W_0=\langle x_ix_j : 1\leq i<j\leq 4\rangle
  \]
  and
  \[
    F=a_1x_1^2+a_2x_2^2+a_3x_3^2+a_4x_4^2 \mbox{ and }
    G=b_1x_1^2+b_2x_2^2+b_3x_3^2+b_4x_4^2
  \]
  Then we set $W_1=\langle F,G\rangle$ and then
  \[W=W_0+W_1.\] We consider two conditions on the coefficients $a_1,a_2,\dots, b_4$:
 
\begin{conditions}
  (1) All the $2$-minors of
  \[
    \begin{pmatrix}
      a_1& a_2& a_3& a_4\\ b_1& b_2& b_3& b_4
    \end{pmatrix}
  \]
  are non-zero.

  (2) The matrix
  \[
    \begin{pmatrix}
      a_1^2&a_2^2&a_3^2&a_4^2\\
      b_1^2&b_2^2&b_3^2&b_4 ^2\\
      a_1b_1&a_2b_2&a_3b_3&a_4b_4\\
    \end{pmatrix}
  \]
  has rank $3$.
\end{conditions}
We observe that $W_0^2$ is generated by the $19$ monomials of degree
$4$ and largest exponent $\leq 2$. Then we note that if $W_1$ contains
a quadric $q$ supported on $x_i^2,x_j^2$ and $x_h^2$ with $i,j,h$
distinct and $k\not\in\{ i,j,h\}$ then $x_kx_iq=x_kx_i^3 \mod(W_0^2)$
and similarly for $j$ and $h$.  This implies that if Condition (1)
holds then $W_0^2+W_0W_1$ is generated by the $31$ monomials different
from $x_1^4,\dots, x_4^4$.  Assuming that Condition (1) holds, we have that
the matrix representing $F^2, G^2, FG$ in $S_4/ W_0^2+W_0W_1$ is
exactly the one appearing in Condition (2). Then are $F^2, G^2, FG$
are linearly independent mod $W_0^2+W_0W_1$ if  and only if
Condition (2) holds.  Summing up, if Conditions (1) and (2) hold then
$\dim W^2=34$.   Finally we observe that  for $F=x_1^2+x_3^2+x_4^2$ and
$G=x_2^2+\alpha x_3^2+x_4^2$ the conditions (1) and (2) are satisfied provided  $\alpha\neq 0$ 
and $\alpha\neq 1$. Hence this (conceptual) argument works for any field but $\ZZ/2\ZZ$. Over $\ZZ/2\ZZ$ one can consider the space $W$ generated by 
$x_{1}^2, x_{2}^2, x_{3}^2, x_{1}x_{3}, x_{2}x_{4}, x_{3}x_{4}, x_{2}x_{3} + x_{1}x_{4}, x_{1}x_{2} + x_{4}^2$ and check with the help of a computer algebra system that $\dim W^2=34$. 
 \end{proof} 
 
 As far as we know the case discussed  in  Proposition  \ref{8quadrics} is the only known case where $u\geq 2n$ and the actual value of  $M(n,d,u,j)$ is smaller than $\min\left\{ \dim S_{jd}, \binom {u-1+j}{u-1} \right\}$.

 
{\bf Acknowledgements:} We thank Winfred Bruns and David Eisenbud for helpful discussions on the material of this paper.

\end{document}